\newcommand{\D}{\displaystyle}
\newcommand{\bx}{\bm{x}}
\newcommand{\bn}{\mathbf{n}}
\newcommand{\by}{\bm{y}}
\newcommand{\bp}{\bm{p}}
\newcommand{\bs}{\bm{s}}
\newcommand{\p}{\partial}
\newcommand{\e}{\epsilon}
\newcommand{\rhk}{\bar{R}_t(\bx, \by)}
\newcommand{\bz}{\bm{z}}
\newcommand{\M}{\mathcal{M}}
\newcommand{\mathd}{\mathrm{d}}
 \newtheorem{theorem}{\textbf{Theorem}}[section]
 \newtheorem{assumption}{\textbf{Assumption}}
\newtheorem{corollary}{\textbf{Corollary}}[section]
\numberwithin{equation}{section}
\begin{document}

\title{Error estimation of weighted nonlocal Laplacian on random point cloud}

\author{
Zuoqiang Shi%
\thanks{Department of Mathematical Sciences \& Yau Mathematical Sciences Center, Tsinghua University, Beijing, China,
100084. \textit{Email: zqshi@tsinghua.edu.cn.}}%
\and
Bao Wang %
\thanks{Department of Mathematics, University of California, Los Angeles, CA 90095, USA,
 \textit{Email: wangbaonj@gmail.com}}
\and
Stanley Osher
\thanks{Department of Mathematics, University of California, Los Angeles, CA 90095, USA,
 \textit{Email: sjo@math.ucla.edu}}
}
\maketitle
\begin{abstract}
  We analyze convergence of the weighted nonlocal Laplacian (WNLL) on high dimensional randomly distributed data. 
The analysis reveals the importance of the scaling weight $\mu \sim P|/|S|$ with $|P|$ and $|S|$ be the number of entire and labeled data, respectively. The result gives theoretical foundation of WNLL for high dimensional data interpolation.
\end{abstract}

\vspace{0.1in}
\noindent{\textbf{Keywords:}}
weighted nonlocal Laplacian; Laplace-Beltrami operator; point cloud; interpolation

\section{Introduction}

In this paper, we consider convergence of the weighted nonlocal Laplacian (WNLL) on high dimensional randomly distributed data.
WNLL is proposed in \cite{WGL} for high dimensional point cloud interpolation.
High dimensional point cloud interpolation is a fundamental problem in machine learning which can be formulated as: Let
$P = \{\bp_1, \cdots, \bp_n\}$ and $S =\{\bs_1, \cdots, \bs_m\}$ be two sets of points in $\mathbb{R}^d$
Suppose $u$ is a function defined on the point cloud $\bar{P}=P\cup S$ which is known only over $S$, denoted as $b(\bs)$ for any $\bs \in S$. The interpolation methods are used to compute $u$ over the whole point cloud $\bar{P}$ from the given values over $S$.



In nonlocal Laplacian, which is widely used in nonlocal methods for image processing \cite{BCM05,BCM06,GO07,GO08}, the interpolation function is obtained by minimizing the energy functional
\begin{equation}
\label{eq:ob-gl}
\mathcal{J}(u) = \frac{1}{2}\sum_{\bx,\by\in \bar{P}} w(\bx,\by)(u(\bx)-u(\by))^2,
\end{equation}
with the constraint
\begin{equation}
  \label{eq:constraint}
  u(\bx)=b(\bx),\quad \bx\in S.
\end{equation}
Here $w(\bx,\by)$ is a given weight function, typically chosen to be Gaussian, i.e.,
$w(\bx,\by)=\exp(-\frac{\|\bx-\by\|^2}{\sigma^2})$, $\sigma$ is a parameter, $\|\cdot\|$ is the Euclidean norm in $\mathbb{R}^d$.
In graph theory and machine learning literatures, nonlocal Laplacian is also called graph Laplacian \cite{Chung:1997,ZhuGL03}.

Graph Laplacian works very well with high labeling rate, i.e., there is a large portion of data been labeled. However, when the labeling rate is low, i.e., $|S|/|\bar{P}|\ll 1$, the solution of the graph Laplacian is found to be discontinuous at the labeled points \cite{Shi:harmonic,WGL}.
WNLL is proposed to fix this problem. In WNLL, energy functional in \eqref{eq:ob-gl} is modified by adding a weight, $\frac{|\bar{P}|}{|S|}$, to balance the labeled and unlabeled terms, which leads to
  \begin{align}
\label{opt:wgl}
\hspace{-3mm}  \min_{u} \sum_{\bx\in P}\left(\sum_{\by\in \bar{P}} w(\bx,\by)(u(\bx)-u(\by))^2\right)
+\frac{|\bar{P}|}{|S|}\sum_{\bx\in  S}\left(\sum_{\by\in \bar{P}} w(\bx,\by)(u(\bx)-u(\by))^2\right),
\end{align}
with the constraint
\begin{align*}
  u(\bx)=b(\bx),\quad \bx\in S.
\end{align*}
 When the labeling rate is high, WNLL is close to graph Laplacian.
When the labeling rate is low, the weight forces the solution to be close to the given values near the labeled points, such that the discontinuities are removed.
With a symmetric weight function, i.e. $w(\bx,\by)=w(\by,\bx)$, the corresponding Euler-Lagrange equation of \eqref{opt:wgl} is a simple linear system
 \begin{align}
2\sum_{\by\in P} w(\bx,\by)\left(u(\bx)-u(\by)\right)
+ \left(\frac{|P|}{|S|}+2\right)\sum_{\by\in S} w(\by,\bx)(u(\bx)-b(\by))&=0,\quad \bx\in P,\nonumber\\
u(\bx)&=b(\bx),\quad \bx\in S.\nonumber
\end{align}
This linear system can be solved efficiently by conjugate gradient iteration. The superiority of the WNLL compared to the graph Laplacian has been shown evidently in image inpainting \cite{Shi:harmonic,WGL}, scientific data interpolation \cite{Zhu:2018JCP},  and more recently deep learning \cite{Wang:2018}.

\subsection{Main Result}
We consider error of the WNLL in a model problem. The whole computational domain is set to be a $k$-dimensional  closed manifold $\M$ embedded in $\mathbb{R}^d$. The point cloud $P$ gives a discrete representation
of $\M$ which is assumed to be uniformly distributed on $\M$.
$\mathcal{D}\subset \M$ be a subset of $\M$ which has been labeled, and $S$ is a uniform sample of $\mathcal{D}$. In $S$, we have $u(\bx)=b(\bx)$.
An illustration of the computational domain and the point cloud is shown in Fig. \ref{fig:domain_large}.
In WNLL, in order to extend the label function $u$ to the entire dataset $P$,
we solve the linear system
\begin{align}
\label{eq:wgl}
\sum_{\by\in P} R_\delta(\bx,\by)\left(u_\delta(\bx)-u_\delta(\by)\right)+ \mu \sum_{\by\in S} K_\delta(\bx,\by)(u_\delta(\bx)-b(\by))&=0,\quad \bx\in P,\\
u_\delta(\bx)&=b(\bx),\quad \bx\in S.\nonumber
\end{align}
where $R_\delta(\bx,\by)$, $K_\delta(\bx,\by)$ are kernel functions given as
\begin{equation}
\label{eq:kernel}
R_\delta(\bx, \by) = C_\delta R\left(\frac{|\bx -\by|^2}{4\delta^2}\right),\quad K_\delta(\bx, \by) = C_\delta K\left(\frac{|\bx -\by|^2}{4\delta^2}\right),
\end{equation}
where $C_\delta = \frac{1}{(4\pi \delta^2)^{k/2}}$ is the normalization factor.
 $R, K\in C^2(\mathbb{R}^+) $ are two kernel functions satisfying the conditions listed in Assumption \ref{assumptions}.
\begin{figure}[H]
\centering
\begin{tabular}{c}
\includegraphics[width=0.6\textwidth]{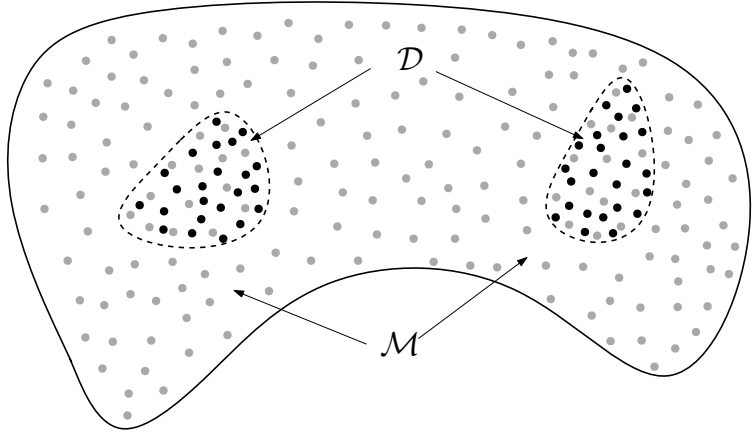}
\end{tabular}
\caption{Illustration of the computational domain. Gray points: sample of $\M$; Black points: sample of $\mathcal{D}\subset \M$.}\label{fig:domain_large}
\end{figure}
As the continuous counterpart, we consider the Laplace-Beltrami equation on a closed smooth manifold $\M$
\begin{align}
\label{eq:laplace-large}
\left\{
\begin{array}{rcll}
  \Delta_\M u(\bx)&=&0, & \bx\in \M,\\
u(\bx)&=&b(\bx),& \bx\in \mathcal{D},
\end{array}\right.
\end{align}
where $\Delta_\mathcal{M}=\text{div}(\nabla)$ is the Laplace-Beltrami operator on $\mathcal{M}$. Let $\Phi: \Omega\subset \mathbb{R}^k\rightarrow \M\subset\mathbb{R}^d$ be a local parametrization of $\M$ and $\theta\in \Omega$.
For any differentiable function $f:\M\rightarrow \mathbb{R}$,
we define the gradient on the manifold
\begin{align}
  \label{eq:diff-M}
  \nabla f(\Phi(\theta))&=\sum_{i,j=1}^m g^{ij}(\theta)\frac{\p \Phi}{\p\theta_i}(\theta)\frac{\p f(\Phi(\theta))}{\p\theta_j}(\theta).
\end{align}
And for vector field $F:\M\rightarrow T_{\bx}\M$ on $\M$, where $T_{\bx}\M$ is the tangent space of $\M$ at $\bx\in \M$, the divergence is defined as
\begin{align}
\label{eq:diver}
\text{div} (F)&= \frac{1}{\sqrt{\det G}}\sum_{k=1}^d\sum_{i,j=1}^m\frac{\p}{\p \theta_i}\left(\sqrt{\det G}\,g^{ij}F^k(\Phi(\theta))\frac{\p \Phi^k}{\p\theta_j}\right)
\end{align}
where $(g^{ij})_{i,j=1,\cdots,k}=G^{-1}$, $\det G$ is the determinant of matrix $G$ and $G(\theta)=(g_{ij})_{i,j=1,\cdots,k}$ is the first fundamental form with
\begin{eqnarray}
  \label{eq:remainn}
  g_{ij}(\theta)=\sum_{k=1}^d\frac{\p \Phi_k}{\p\theta_i}(\theta)\frac{\p \Phi_k}{\p\theta_j}(\theta),\quad i,j=1,\cdots,m.
\end{eqnarray}
and $(F^1(\bx),\cdots,F^d(\bx))^T$ is the representation of $F$ in the embedding coordinates.

To prove the convergence, we need the following assumptions.
\begin{assumption}
\label{assumptions}
\begin{itemize}
\item[]


\item \rm Assumptions on the manifold: $\M$ be a $k$-dimensional closed $C^\infty$ manifold isometrically embedded in a Euclidean space $\mathbb{R}^d$. $\mathcal{D}$ and $\p\mathcal{D}$ are smooth submanifolds of $\mathbb{R}^d$. Moreover, $b(\bx)\in C^1(\mathcal{D})$.

\item \rm Assumptions on the kernel functions:
\begin{itemize}
\item[\rm (a)] \rm Smoothness: $K(r), R(r)\in C^2(\mathbb{R}^+)$;
\item[(b)] Nonnegativity: $R(r), K(r)\ge 0$ for any $r\ge 0$.
\item[(c)] Compact support:
$R(r) = 0$ for $\forall r >1$; $K(r) = 0$ for $\forall r > r_0\ge 2$.
\item[(d)] Nondegeneracy:
 $\exists \delta_0>0$ such that $R(r)\ge\delta_0$ for $0\le r\le 1/2$ and $K(r)\ge\delta_0$ for $0\le r\le 2$.
\end{itemize}
\item Assumptions on the point cloud: $P$ and $S$ are uniformly distributed on $\M$ and $\mathcal{D}$, respectively.
\end{itemize}
\end{assumption}
In this paper, we use the notation $C$ to denote any constant which may be different in different places.
The main contribution of this paper is to analyze relation between
the solutions of the Laplace-Beltrami equation \eqref{eq:laplace-large} and the WNLL \eqref{eq:wgl}. More precisely, we prove the following theorem:
\begin{theorem}
\label{thm:main}
Let $u_\delta$ solves \eqref{eq:wgl} and $u$ solves \eqref{eq:laplace-large}.
  Under the assumptions in Assumption \ref{assumptions}, with probability at least $1-1/(2n),\; where \  n=|P|$, we have
\begin{equation}
    |u_\delta-u|\le C\delta,\nonumber
\end{equation}
as long as
\begin{equation}
\label{eq:condition-mu-intro}
    \mu\sum_{\by\in S} K_\delta(\bx, \by)\ge C \sum_{\by\in P} R_\delta(\bx,\by),\quad \bx\in P\cap\mathcal{D}_\delta.
\end{equation}
$\mathcal{D}_\delta=\{\bx\in \M: \text{dist}(\bx,\mathcal{D})\le 2\delta\}$, $C=C(\M,\mathcal{D},R,K)>0$ is a constant independent of $\delta$, $P$ and $S$.
\end{theorem}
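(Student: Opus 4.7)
The plan is to split the total error $e_\delta:=u_\delta-u|_P$ into a consistency piece and a random-sampling piece, then invoke a discrete stability argument tailored to the WNLL operator to upgrade pointwise residuals to a uniform $C\delta$ bound. Writing
\[
L_\delta v(\bx) = \sum_{\by\in P}R_\delta(\bx,\by)\bigl(v(\bx)-v(\by)\bigr) + \mu\sum_{\by\in S}K_\delta(\bx,\by)\bigl(v(\bx)-b(\by)\bigr)
\]
for the discrete WNLL operator and noting $u|_\mathcal{D}=b$ and $S\subset\mathcal{D}$, one sees that $e_\delta$ vanishes on $S$ and satisfies $L_\delta e_\delta=-L_\delta u$ on $P$. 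The task thus splits into (i) bounding the residual $L_\delta u$ pointwise, and (ii) inverting $L_\delta$ stably on functions vanishing at $S$.

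\textbf{Residual bound.} Introduce the expected counterpart $\mathcal{L}_\delta$ of $L_\delta$ by replacing the empirical sums over $P$ and $S$ with integrals against the uniform densities on $\M$ and $\mathcal{D}$. Plugging $u$ into $\mathcal{L}_\delta$ and Taylor-expanding the nonlocal kernel, using $\Delta_\M u=0$ on $\M\setminus\mathcal{D}$, $u|_\mathcal{D}=b\in C^1$, and the $C^0$ (gradient-jump) matching of $u$ across $\partial\mathcal{D}$, yields an $O(\delta)$ consistency error on the boundary layer $\mathcal{D}_\delta$ and $O(\delta^2)$ elsewhere. The random difference $L_\delta u-\mathcal{L}_\delta u$ is an empirical-mean fluctuation controlled by Bernstein's inequality; a union bound over the $n$ points of $P$ caps the failure probability at $1/(2n)$ and produces the uniform discrete residual estimate
\[
|L_\delta u(\bx)|\le C\bigl(\delta\,\mathbf{1}_{\bx\in\mathcal{D}_\delta}+\delta^2\bigr)\,M_\delta(\bx),\quad\forall\bx\in P,
\]
with $M_\delta(\bx):=\sum_{\by\in P}R_\delta(\bx,\by)+\mu\sum_{\by\in S}K_\delta(\bx,\by)$ the total mass at $\bx$.

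\textbf{Stability via a barrier.} To invert $L_\delta$, I would construct a smooth positive barrier $\psi\in C^2(\M)$, bounded uniformly on $\M$, satisfying two regime-specific conditions. On $\M\setminus\mathcal{D}_\delta$, $\Delta_\M\psi\le-c_1<0$, so that the graph-Laplacian part of $L_\delta\psi$ is at least $c\delta^2 M_\delta$, matching the $O(\delta^2)M_\delta$ residual there. On $\mathcal{D}_\delta$, $\psi>b+\eta$ for a fixed $\eta>0$, so that the penalty contribution $\mu\sum_{\by\in S}K_\delta(\bx,\by)(\psi(\bx)-b(\by))\ge\eta\,\mu\sum_{\by\in S}K_\delta(\bx,\by)$ is, by virtue of hypothesis \eqref{eq:condition-mu-intro}, at least $cM_\delta$, and hence dominates the worst-case $O(\delta)M_\delta$ residual. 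A discrete maximum-principle comparison of $\pm(e_\delta-C\delta\psi)$, exploiting $e_\delta|_S=0$ and the symmetric positive-semi-definite structure of $L_\delta$ with strictly positive diagonal on the penalty support, then delivers $|e_\delta|\le C\delta$ on $P$.

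\textbf{Main obstacle.} The delicate point is the construction of $\psi$ on the closed manifold $\M$. Because $\int_\M\Delta_\M\psi\,dV=0$, the condition $\Delta_\M\psi\le-c_1<0$ on $\M\setminus\mathcal{D}_\delta$ forces $\Delta_\M\psi$ to be strongly positive on $\mathcal{D}_\delta$ (a set of volume $O(|\partial\mathcal{D}|\delta)$), and this positive Laplacian tends to drive the graph-Laplacian piece of $L_\delta\psi$ in the wrong direction there. The scaling hypothesis \eqref{eq:condition-mu-intro} is exactly what allows the penalty term to absorb this unwanted contribution and keep $L_\delta\psi\ge cM_\delta$ on $\mathcal{D}_\delta$. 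Making the transition between the two regimes smooth, keeping $\psi$ uniformly bounded, and ensuring that the discrete (rather than continuous) coercivity is robust to the random sampling of $P$ and $S$---all uniformly in $\delta$---is where the bulk of the technical work lives.
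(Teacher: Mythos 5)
Your overall strategy---(i) bound the residual $L_\delta u$ pointwise via kernel consistency plus concentration, (ii) invert $L_\delta$ via a positive barrier and a discrete comparison principle, with hypothesis \eqref{eq:condition-mu-intro} absorbing the bad boundary-layer contribution---is the same decomposition the paper uses (Theorems~\ref{thm:maximum-principle} and~\ref{thm:error}, combined). Your discussion of the ``main obstacle'' correctly identifies why the closed-manifold constraint $\int_\M\Delta_\M\psi=0$ makes \eqref{eq:condition-mu-intro} essential. However, there are two genuine gaps.

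First, the discrete comparison principle is not automatic. You invoke ``the symmetric PSD structure with strictly positive diagonal on the penalty support,'' but this only propagates information from $S$ into all of $P$ if $P$ is $S$-connected through the kernel's support graph; away from $\mathcal{D}_\delta$ the penalty term vanishes, and the argument can stall at an isolated cluster. That this connectivity holds with probability at least $1-1/(2n)$ is precisely the content of the paper's Theorem~\ref{thm:maximum-principle}, proved in Section~\ref{sec:max} by applying the covering-number bound (Theorem~\ref{thm:entropy-bound}) to the function class $\mathcal{R}_{\delta/4}$ at a point $\bx_0\in\p\bar{S}_\delta\cap\M$ that would be empty of samples. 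Your Bernstein-plus-union-bound framework is not obviously sufficient here, since $\bx_0$ is not one of the $n$ sample points; some uniform (covering-net) argument is needed. This step is missing entirely from the proposal.

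Second, your residual exponent outside $\mathcal{D}_\delta$ is too weak. You claim $|L_\delta u(\bx)|\lesssim \delta^2 M_\delta(\bx)$ there, while your barrier satisfies $L_\delta\psi\gtrsim\delta^2 M_\delta$ on the same set (a Taylor expansion of a $C^2$ function with $\Delta_\M\psi\le -c_1$ gives exactly the factor $\delta^2$). The comparison then yields only $|e_\delta|\lesssim 1$, not $\lesssim\delta$. The paper obtains the extra power by using the point integral consistency estimate (Theorem~\ref{thm:integral_error}, from \cite{Shi-iso}): for $\Delta_\M u=0$ and $u\in C^3$ away from $\mathcal{D}$, the unnormalized integral $\int_\M R_\delta(\bx,\by)(u(\bx)-u(\by))\,\mathd\by$ is $O(\delta^3)$, and under the sample-size condition \eqref{eq:p-large} the empirical fluctuation is also $O(\delta^3)$. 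You need this $\delta^3$, not $\delta^2$, to conclude the $O(\delta)$ rate. Two smaller issues: you mix the inhomogeneous $L_\delta$ (with $-b(\by)$ in the penalty) with the linearly scaled comparison function $C\delta\psi$, but $C\delta\psi(\bx)-b(\by)$ does not scale and can go negative as $\delta\to0$; the paper sidesteps this by working with the homogeneous operator $L_{\delta,n}$ in \eqref{eq:Ld}, so the barrier $v$ of \eqref{eq:laplace-large-ref} only needs $v\ge1$, and your extra condition $\psi>b+\eta$ becomes unnecessary. Also, $\mathcal{D}_\delta$ has volume $O(1)$, not $O(\delta)$, since $\mathcal{D}$ is a full-dimensional subregion of $\M$ (only the collar $\mathcal{D}_\delta\setminus\mathcal{D}$ is thin), though this does not change the substance of your obstacle discussion.
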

In the above theorem, \eqref{eq:condition-mu-intro} actually gives a condition for the weight $\mu$.
Notice that $$ \frac{1}{n} \sum_{\by\in P} R_\delta(\bx,\by)\approx \frac{1}{|\M| }\int_\M R_\delta(\bx,\by)\mathd \by= O(1),\quad \bx\in P\cap\mathcal{D}_\delta.$$
$S$ samples $\mathcal{D}$, if $S$ is dense enough, we have that
\begin{equation}
\frac{1}{|S|}\sum_{\by\in S} K_\delta(\bx, \by)\approx\frac{1}{|\mathcal{D}| }\int_\mathcal{D} K_\delta(\bx,\by)\mathd \by, \quad \bx\in P\cap\mathcal{D}_\delta.\nonumber
\end{equation}
Here, we need the assumption on $K$ such that $K(r)\ge\delta_0>0, \forall 0\le r\le 2$. This implies that
\begin{equation*}
\int_\mathcal{D} K_\delta(\bx,\by)\mathd \by=O(1), \quad \bx\in P\cap\mathcal{D}_\delta.
\end{equation*}
Hence, from \eqref{eq:condition-mu-intro}, we have
\begin{equation}
    \mu\sim \frac{|P|}{|S|}.\nonumber
\end{equation}
This explains the scaling of $\mu$ in WNLL.
\begin{figure}[H]
\centering
\begin{tabular}{c}
\includegraphics[width=0.6\textwidth]{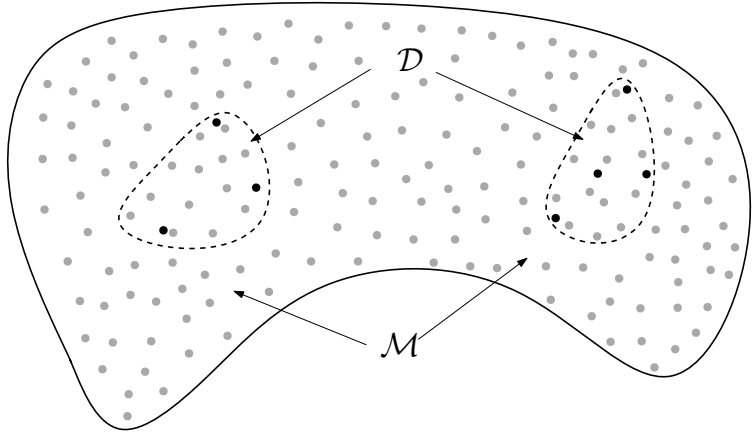}
\end{tabular}
\caption{Illustration of the computational domain with extremely low labeling rate.}\label{fig:domain_sparse}
\end{figure}
On the other hand, if sample $S$ is extremely sparse such that $\bigcup_{\bx\in S}B(\bx;4\delta)$ does not cover $\mathcal{D}_\delta$
 as shown in Fig. \ref{fig:domain_sparse}, $\sum_{\by\in S} K_\delta(\bx, \by)$ may be zero for some $\bx\in P\cap\mathcal{D}_\delta$.
In this case,  condition \eqref{eq:condition-mu-intro} does not hold. Then we can not guarantee the convergence even in WNLL.
With extremely low labeling rate, actually, the whole framework of harmonic extension fails \cite{NS09,ZB11}. We should use other approach to get a smooth interpolation.

Theorem \ref{thm:main} is a direct consequence of the maximum principle (Theorem \ref{thm:maximum-principle}) and the error estimation (Theorem \ref{thm:error}).
\begin{theorem}
\label{thm:maximum-principle}
 Under the assumptions in Assumption \ref{assumptions}, with probability at least $1-1/(2n)$, $n=|P|$, $L_{\delta,n}$ has the comparison principle, i.e.
\begin{align}
  |L_{\delta,n} u (\bx)|\le L_{\delta,n} v (\bx)\quad \rightarrow\quad |u|\le v,\nonumber
\end{align}
where
 \begin{align}
\label{eq:Ld}
L_{\delta,n} u(\bx)=\sum_{\by\in P} R_\delta(\bx,\by)\left(u(\bx)-u(\by)\right)+ \mu \sum_{\by\in S} K_\delta(\bx,\by)u(\bx),\quad \bx\in P.
\end{align}
\end{theorem}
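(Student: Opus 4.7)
The plan is to reduce the two-sided comparison principle to a one-sided discrete maximum principle, derive that principle from a minimum-value argument exploiting the sign structure of $R_\delta$ and $K_\delta$, and then remove the remaining geometric obstruction---isolated minimum ``islands'' with no $S$-neighbor---by a high-probability connectivity estimate for the random $R_\delta$-graph on $P$.

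First I would rewrite the hypothesis $|L_{\delta,n}u(\bx)|\le L_{\delta,n}v(\bx)$ on $P$ as the pair of inequalities $L_{\delta,n}(v\pm u)(\bx)\ge 0$ on $P$; proving $v\pm u\ge 0$ then yields $|u|\le v$. So it suffices to show: if $w\colon P\to\R$ satisfies $L_{\delta,n}w\ge 0$ on $P$, then $w\ge 0$ on $P$. Arguing by contradiction, assume $m:=\min_{\bx\in P}w(\bx)<0$ and let $P_m=\{\bx\in P:w(\bx)=m\}$. For any $\bx_0\in P_m$, the nonnegativity of $R_\delta,K_\delta$ together with $w(\by)\ge m$ for all $\by\in P$ render every summand on the right of
\begin{equation*}
0\le L_{\delta,n}w(\bx_0)=\sum_{\by\in P}R_\delta(\bx_0,\by)(m-w(\by))+\mu\, m\sum_{\by\in S}K_\delta(\bx_0,\by)
\end{equation*}
non-positive, forcing each to vanish. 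Hence for every $\bx_0\in P_m$, (i) $w(\by)=m$ whenever $R_\delta(\bx_0,\by)>0$, and (ii) $\sum_{\by\in S}K_\delta(\bx_0,\by)=0$. Property (i) means $P_m$ is a union of connected components of the graph on $P$ with edge set $\{(\bx,\by):R_\delta(\bx,\by)>0\}$, while (ii) says no vertex of $P_m$ has an $S$-point within its $K_\delta$-support.

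The hard part, and the main obstacle, is the probabilistic geometric fact needed to derive a contradiction: with probability at least $1-1/(2n)$, every connected component of the $R_\delta$-graph on $P$ contains at least one vertex $\bx$ with $\sum_{\by\in S}K_\delta(\bx,\by)>0$. I would establish this by a standard covering argument. Cover $\M$ by $O(\delta^{-k})$ geodesic balls of radius $c\delta$, with $c$ chosen small enough---using the nondegeneracy assumptions $R(r)\ge\delta_0$ on $[0,1/2]$ and $K(r)\ge\delta_0$ on $[0,2]$---that any two adjacent balls produce an $R_\delta$-edge between their $P$-points and any ball meeting $\mathcal{D}$ produces a $K_\delta$-link between its $P$-content and some nearby $S$-vertex. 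A Bernstein/Chernoff tail bound applied to the uniform samples $P$ on $\M$ and $S$ on $\mathcal{D}$, combined with a union bound over the $O(\delta^{-k})$ covering balls, shows that provided $n\delta^k\gtrsim\log n$ every such ball contains at least one $P$-point and every ball meeting $\mathcal{D}$ contains at least one $S$-point, except on an event of probability at most $1/(2n)$. Connectedness of $\M$ then forces the $R_\delta$-graph on $P$ to be connected, and the smooth submanifold $\mathcal{D}$ is reached by at least one vertex in each component, contradicting (ii). The implicit lower bound on $\delta$ and the dependence on $\M,\mathcal{D},R,K$ are absorbed into the constants of the theorem.
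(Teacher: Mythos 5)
Your overall structure matches the paper's: reduce the two-sided comparison principle to the one-sided maximum principle $L_{\delta,n}w\ge 0 \Rightarrow w\ge 0$, argue at a negative minimum to conclude that the minimum set is simultaneously a union of $R_\delta$-components and devoid of $S$-neighbors, and rule this out by a high-probability connectivity statement. The paper merely asserts that $S$-connectedness of $P$ gives the maximum principle as ``easy to check''; your minimum-value derivation fills that in correctly. The genuine divergence is in establishing the connectivity. The paper proves a uniform deviation bound over the class $\mathcal{R}_\delta=\{R_\delta(\bx,\cdot):\bx\in\M\}$ (Theorem \ref{thm:entropy-bound}, established in Section \ref{sec:entropy} via $L_\infty$ covering numbers of the function class and an empirical-process tail inequality), then argues by contradiction: if $P$ were not $S$-connected, connectedness of $\M$ produces a point $\bx_0\in\p\bar{S}_\delta\cap\M$ at distance $\ge\delta/2$ from all of $P$, so $I_n(R_{\delta/4}(\bx_0,\cdot))=0$ while $I(R_{\delta/4}(\bx_0,\cdot))=O(1)$, which the uniform bound excludes with the stated probability. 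You instead cover $\M$ directly by $O(\delta^{-k})$ balls and apply Chernoff plus a union bound over the balls; both are ultimately covering arguments, but yours is more elementary and bypasses the empirical-process machinery, whereas the paper's uniform-over-$\bx$ bound is set up so that it can be reused for the error estimate in Section \ref{sec:convergence}. One small point to tighten in your version: you also require that every covering ball meeting $\mathcal{D}$ contain an $S$-point, which introduces a union bound over $S$-events and would make the failure probability depend on $|S|$, while the theorem's stated bound is $1-1/(2n)$ with $n=|P|$ and independent of $|S|$. This extra requirement is unnecessary: once every ball contains a $P$-point, the $R_\delta$-graph on $P$ is a single component, so it suffices to exhibit one $P$-vertex within $K_\delta$-range of one fixed $\bs\in S$, and that already follows from the $P$-only Chernoff bound applied to the ball around $\bs$. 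The paper's contradiction argument likewise treats $S$ as a fixed nonempty set and uses only the randomness of $P$.
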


\begin{theorem}
\label{thm:error}
Let $u_\delta$ and $u$ solve \eqref{eq:wgl} and \eqref{eq:laplace-large} respectively. $v$ is the solution of \eqref{eq:laplace-large-ref},
\begin{align}
  \label{eq:laplace-large-ref}
\left\{
\begin{array}{rcll}
  -\Delta_\M v(\bx)&=&1, & \bx\in \M\backslash \mathcal{D},\\
v(\bx)&=&1,& \bx\in \mathcal{D}.
\end{array}\right.
\end{align}
 Under the assumptions in Assumption \ref{assumptions}, with probability at least $1-1/(2n)$, $n=|P|$,
\begin{align}
  |L_{\delta,n} (u_\delta-u)|\le C \delta L_{\delta,n} v,\nonumber
\end{align}
as long as
\begin{equation}
    \mu\sum_{\by\in S} K_\delta(\bx, \by)\ge C \sum_{\by\in P} R_\delta(\bx,\by),\quad \bx\in P\cap\mathcal{D}_\delta.\nonumber
\end{equation}
$\mathcal{D}_\delta=\{\bx\in \M: \text{dist}(\bx,\mathcal{D})\le 2\delta\}$, $C=C(\M,\mathcal{D},R,K)>0$ is a constant independent on $\delta$, $P$ and $S$.
\end{theorem}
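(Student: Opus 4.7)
The plan has four ingredients: (a) an explicit representation of $L_{\delta,n}(u_\delta - u)$ via \eqref{eq:wgl}; (b) a concentration argument passing from discrete sums to continuous integrals; (c) a continuous consistency estimate together with a careful boundary-layer analysis near $\partial\mathcal{D}$; and (d) a region-by-region comparison with $L_{\delta,n} v$.

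For (a), since $u\equiv b$ on $\mathcal{D}\supset S$ and $u_\delta$ satisfies \eqref{eq:wgl}, a direct subtraction gives $L_{\delta,n}(u_\delta-u)(\bx)=-T_{\delta,n} u(\bx)$, where
\begin{equation*}
T_{\delta,n}u(\bx):=\sum_{\by\in P} R_\delta(\bx,\by)\bigl(u(\bx)-u(\by)\bigr)+\mu\sum_{\by\in S} K_\delta(\bx,\by)\bigl(u(\bx)-u(\by)\bigr).
\end{equation*}
The task therefore reduces to the pointwise bound $|T_{\delta,n} u(\bx)|\le C\delta\, L_{\delta,n} v(\bx)$ for every $\bx\in P$.

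For (b) and (c), I would replace each kernel sum by its continuous integral against the uniform density on $\M$ or $\mathcal{D}$. Viewing the kernels as bounded by $\sim\delta^{-k}$ and supported on balls of volume $\sim\delta^k$, Bernstein's inequality together with a union bound over the $n$ points of $P$ shows that, with probability at least $1-1/(2n)$, the empirical sums differ from their continuous counterparts by a small remainder. I would then Taylor-expand $u$ around $\bx$. When $\bx$ is ``harmonically regular'' (either deep inside $\mathcal{D}$ with $B(\bx;2\delta)\subset\mathcal{D}$, or outside with $\textrm{dist}(\bx,\mathcal{D})>2\delta$), symmetry of the kernels over a full ball kills the first-order term and the standard consistency of nonlocal Laplacian approximations gives $-C_R\delta^2\Delta_\M u(\bx)+O(\delta^3)$ for the $R$-sum and an analogous contribution for the $K$-sum. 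Because $\Delta_\M u=0$ outside $\mathcal{D}$ and $u=b$ is smooth inside, the total is $O(n\delta^2)$ or smaller. Applying the same analysis to $v$, which satisfies $-\Delta_\M v=1$ on $\M\setminus\mathcal{D}$, yields $L_{\delta,n}v(\bx)\sim n\delta^2$ away from $\mathcal{D}$ and $L_{\delta,n}v(\bx)\sim n$ inside $\mathcal{D}$ via the labeled sum, producing the desired factor $\delta$ in both regions.

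The main obstacle is the boundary layer $\bx\in P\cap\mathcal{D}_\delta$: the integration domain $\mathcal{D}$ for the $K$-sum is asymmetric around $\bx$, so the first-order gradient term $\nabla b(\bx)\cdot\int_\mathcal{D}(\by-\bx)K_\delta(\bx,\by)\mathd \by$ no longer cancels and contributes an irreducible $O(\delta)$ residual, giving $T_{\delta,n}u(\bx)=O(n\delta)$. Matters are further complicated by the fact that $u$ is in general not $C^2$ across $\partial\mathcal{D}$, which I would handle by splitting $B(\bx;\delta)$ into its $\mathcal{D}$- and $\M\setminus\mathcal{D}$-parts, Taylor-expanding $u$ on each side separately (using $b\in C^1$ on $\mathcal{D}$ and harmonicity outside), and controlling the mismatch via Lipschitz bounds and surface-measure estimates on $\partial\mathcal{D}\cap B(\bx;\delta)$. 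The saving grace is that the same geometry forces $L_{\delta,n} v(\bx)$ to be large: since $v\equiv 1$ on $\mathcal{D}\supset S$, the $R$-sum in $L_{\delta,n}v(\bx)$ is non-negative and the labeled sum equals $\mu\sum_{\by\in S}K_\delta(\bx,\by)$, which by the hypothesis \eqref{eq:condition-mu-intro} is at least $C\sum_{\by\in P}R_\delta(\bx,\by)\sim Cn$. This exceeds the $O(n\delta)$ bound on $T_{\delta,n}u$ by precisely the factor $\delta$. Assembling the estimates over the exterior, boundary-layer, and deep-interior regions completes the proof.
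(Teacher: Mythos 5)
Your proposal reproduces the paper's overall architecture almost exactly: subtract the equations to express $L_{\delta,n}(u_\delta-u)$ as the truncation error of $L_{\delta,n}$ applied to the exact solution $u$, pass from sums to integrals by a high--probability concentration bound, invoke consistency of the nonlocal operator for the $\M\setminus\mathcal{D}_\delta$ region and a Lipschitz bound inside $\mathcal{D}_\delta$, and finally compare region by region against $L_{\delta,n}v$. The one genuinely different choice is the concentration tool: you propose Bernstein plus a union bound over the $n$ random points of $P$, while the paper bounds $\sup_{\bx\in\M}$ via covering numbers and the empirical--process result of Mendelson (Theorem \ref{thm:entropy}). For this theorem, where the estimate is only needed at $\bx\in P$, either works; the paper's entropy approach is reused for the $S$--connectedness argument in the maximum principle, where genuine uniformity over all $\bx\in\M$ is required, so the authors get both from one lemma.

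There are, however, two places where your stated reasoning would not survive being written out. First, in the exterior region you record the $R$--sum contribution as ``$-C_R\delta^2\Delta_\M u(\bx)+O(\delta^3)$, so the total is $O(n\delta^2)$ or smaller.'' Since $L_{\delta,n}v\sim n\delta^2$ there, a bound of $O(n\delta^2)$ is \emph{not} enough to produce the extra factor $\delta$; the argument requires using $\Delta_\M u=0$ to upgrade this to $O(n\delta^3)$, as the paper does through Theorem \ref{thm:integral_error} (and you must verify that the boundary term $I_{bd}$ vanishes because $\text{supp}\,R_\delta(\bx,\cdot)$ misses $\mathcal{D}$ when $\bx\notin\mathcal{D}_\delta$). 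Second, your lower bound on $L_{\delta,n}v$ in the labeled region rests on the claim that ``the $R$--sum in $L_{\delta,n}v(\bx)$ is non-negative.'' That is false: for $\bx$ near $\partial\mathcal{D}$ the function $v$ attains its minimum value $1$ on $\mathcal{D}$, so many terms $v(\bx)-v(\by)$ are $\le 0$ and the $R$--sum can be strictly negative. The correct step, as in \eqref{eq:error-v1-2}, is to lower-bound it by $-\frac{C\delta}{n}\sum_{\by\in P}R_\delta(\bx,\by)$ via the Lipschitz bound on $v$ and then absorb this loss into the labeled term using condition \eqref{eq:condition-mu}. Relatedly, treating the deep interior of $\mathcal{D}$ as ``harmonically regular'' and Taylor--expanding $u=b$ to second order is not justified under the assumption $b\in C^1(\mathcal{D})$ only; the paper sidesteps this by placing all of $\mathcal{D}\subset\mathcal{D}_\delta$ in the region handled by the first--order Lipschitz estimate \eqref{eq:error-1-2}, and you should do the same. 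With these repairs your proof closes and matches the paper's.
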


The above two theorems will be proved in Section \ref{sec:max} and Section \ref{sec:convergence}, respectively.
In Section \ref{sec:entropy}, we prove a technical theorem used in the analysis. Some discussions are made in Section \ref{sec:concl}.

\section{Maximum Principle (Theorem \ref{thm:maximum-principle})}
\label{sec:max}
First, we introduce some notations.
For any two points $\bx,\by\in P$, we say that they are neighbors if and only if $R_\delta(\bx,\by)>0$, denoted as $\bx\sim \by$.
For $\bx\in S,\by\in P\cup S$, they are neighbors if and only if $K_\delta(\bx,\by)>0$, denoted also by $\bx\sim \by$ or $\by\sim \bx$. $\bx$ and $\by$ are connected if there exist
$\bz_1,\cdots,\bz_m\in P\cup S$ such that $$\bx\sim \bz_1\sim\cdots\sim\bz_m\sim\by.$$
We say point cloud $P$ is $S$-connected if for any point $\bx\in P$, there exists $\by\in S$, such that $\bx$ and $\by$ are connected.

If $P$ is $S$-connected, it is easy to check that $L_{\delta,n}$ has the maximum principle, i.e.
\begin{align}
  \label{eq:max}
  L_{\delta,n} u (\bx)\ge 0,\; \bx \in P \quad \rightarrow \quad u(\bx)\ge 0,\; \bx \in P,\\
  L_{\delta,n} u (\bx)\le 0,\; \bx \in P \quad \rightarrow \quad u(\bx)\le 0,\; \bx \in P.
\end{align}
and consequently
\begin{align}
  \label{eq:compare}
  |L_{\delta,n} u (\bx)|\le L_{\delta,n} v (\bx)\quad \rightarrow\quad |u|\le v.
\end{align}
In the rest of this section, we will prove that with high probability, $P$ is $S$-connected. To prove this, we need a theorem from the empirical process theory \cite{entropy}.
\begin{theorem}
\label{thm:entropy-bound}
  With probability at least $1-1/(2n)$, $n=|P|$,
\begin{align}
  \sup_{f\in \mathcal{R}_\delta}|I(f)-I_n(f)|\le \frac{C}{\delta^k\sqrt{n}}\left(\ln n-2\ln \delta +1\right)^{1/2},
\end{align}
where $k$ is the dimension of $\M$,
$$I(f)=\frac{1}{|\M|}\int_\M f(\bx)\mathd \bx,\quad I_n(f)=\frac{1}{n}\sum_{\bx\in P}f(\bx),$$
$|\M|$ is the volume of $\M$ and $\mathcal{R}_\delta$ is a function class defined as
$$\mathcal{R}_\delta=\{R_\delta(\bx,\cdot): \bx\in \M.\}$$
\end{theorem}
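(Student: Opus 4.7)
The plan is to establish the uniform deviation bound by a classical covering-number argument: cover $\M$ by an $\epsilon$-net, apply Hoeffding's inequality at each net point, union-bound over the net, and use a Lipschitz estimate on $R_\delta(\bx,\cdot)$ in $\bx$ to transfer the bound from the net to all $\bx \in \M$.

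The first step is to extract two elementary size estimates on the family $\mathcal{R}_\delta$. Since $R \in C^2(\R^+)$ has compact support in $[0,1]$ and $C_\delta = (4\pi\delta^2)^{-k/2}$, the sup norm satisfies $\|R_\delta(\bx,\cdot)\|_\infty \le C\delta^{-k}$ uniformly in $\bx$. Differentiating $R_\delta(\bx,\by) = C_\delta R(|\bx - \by|^2/(4\delta^2))$ in $\bx$ and using $|\bx - \by| \le 2\delta$ on the support of $R'$, I obtain the Lipschitz estimate
\begin{equation*}
|R_\delta(\bx,\by) - R_\delta(\bx',\by)| \le C\delta^{-k-1}|\bx - \bx'|,
\end{equation*}
both uniformly in $\by$.

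The second step is the concentration argument. Since $\M$ is a compact $k$-dimensional smooth manifold isometrically embedded in $\R^d$, it admits an $\epsilon$-net $\{\bx_1, \dots, \bx_N\} \subset \M$ with $N \le C\epsilon^{-k}$ in the extrinsic metric. For each $j$, Hoeffding's inequality applied to the bounded i.i.d.\ sample $\{R_\delta(\bx_j, \bp_i)\}_{i=1}^n$ yields
\begin{equation*}
P\bigl(|I(R_\delta(\bx_j,\cdot)) - I_n(R_\delta(\bx_j,\cdot))| \ge t\bigr) \le 2\exp(-c\, n \delta^{2k} t^2).
\end{equation*}
A union bound over the net, combined with the Lipschitz interpolation
\begin{equation*}
\sup_{\bx \in \M}\,|(I - I_n)(R_\delta(\bx,\cdot))| \;\le\; \max_{1 \le j \le N}\,|(I - I_n)(R_\delta(\bx_j,\cdot))| + 2C\delta^{-k-1}\epsilon,
\end{equation*}
reduces the proof to choosing $\epsilon$ to balance the two error contributions. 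Taking $\epsilon \sim \delta/\sqrt{n}$ produces $N \le C(\sqrt n/\delta)^k$ and $\delta^{-k-1}\epsilon \sim \delta^{-k}/\sqrt n$; demanding that the total failure probability $2N\exp(-cn\delta^{2k}t^2)$ be at most $1/(2n)$ then forces $t = O\bigl(\delta^{-k} n^{-1/2}(\ln n - \ln\delta + 1)^{1/2}\bigr)$, which, after absorbing constants into $C$, gives the stated rate.

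The main obstacle is the bookkeeping of two distinct $\delta$-dependent scales --- the sup-norm scale $\delta^{-k}$ entering Hoeffding's tail and the Lipschitz scale $\delta^{-k-1}$ controlling the mesh error --- and choosing $\epsilon$ small enough that the Lipschitz term does not dominate, yet large enough that $\ln N(\epsilon)$ does not inflate the logarithmic factor. The choice $\epsilon \sim \delta/\sqrt n$ is precisely this equilibrium and produces the factor $(\ln n - 2\ln\delta + 1)^{1/2}$ in the statement. Verifying the covering estimate $N(\epsilon) \le C\epsilon^{-k}$ for $\M$ is standard given compactness and smoothness of the embedding.
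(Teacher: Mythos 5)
Your proposal is correct and is essentially the paper's argument: the core of both is the same Lipschitz estimate giving a covering number $N(\e,\mathcal{R}_\delta,L_\infty)\lesssim(\e\delta)^{-k}$, combined with concentration of the empirical mean and a union bound over the net, with the $\delta^{-k}$ sup-norm scale producing the prefactor and the $\ln N$ term producing the $(\ln n-2\ln\delta+1)^{1/2}$ factor. The only cosmetic difference is that the paper invokes a packaged empirical-process bound (Mendelson, Theorem 2.3) stated directly in terms of covering numbers, whereas you unroll the same covering + Hoeffding + union-bound + Lipschitz-transfer computation by hand.
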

This theorem will be proved in Section \ref{sec:entropy}.

Suppose $P$ is not $S$-connected. Let $$\bar{S}=\{\bx\in P\cup S: \bx \;\text{is connected to $S$}\},\quad \bar{S}^c=(P\cup S)\backslash \bar{S}.$$
Then $\bar{S}^c\ne \emptyset$.
Denote
\begin{align*}
  \bar{S}_\delta=\left(\bigcup_{\bx\in \bar{S}}B(\bx;\delta/2)\right)\cap \M,\quad \bar{S}_\delta^c=\left(\bigcup_{\bx\in \bar{S}^c}B(\bx;\delta/2)\right)\cap \M
\end{align*}
where $B(\bx;\delta)=\{\by\in \mathbb{R}^d: |\bx-\by|\le \delta\}$.

Using the definition of $\bar{S}$ and $\bar{S}^c$, we know that
$\bar{S}_\delta\cap \bar{S}_\delta^c=\emptyset$, hence
$$\p \bar{S}_\delta\cap \bar{S}_\delta^c=\emptyset,$$
where $\p \bar{S}_\delta$ is the boundary of $\bar{S}_\delta$ in $\mathbb{R}^d$.
Furthermore, since $\M$ is connected, we have
$$\p \bar{S}_\delta \cap \M \ne \emptyset.$$
Choose any $\bx_0\in \p \bar{S}_\delta \cap \M$, we also have that $\bx_0\notin \bar{S}_\delta^c$, which implies that
$$R_{\delta/4}(\bx_0,\by)=0,\quad \forall \by\in P.$$
It follows that
$$I_n(R_{\delta/4}(\bx_0,\cdot))=0.$$
On the other hand, $I(R_{\delta/4}(\bx_0,\cdot))=O(1)$. Using Theorem \ref{thm:entropy-bound}, we know that the probability is less than $1/(2n)$, which proves that
$P$ is $S$-connected with probability at least $1-1/(2n)$.
So far, we have proved Theorem \ref{thm:maximum-principle}.

\section{Error Estimate (Theorem \ref{thm:error})}
\label{sec:convergence}

Let $e_\delta(\bx)=u_\delta(\bx)-u(\bx)$. $u_\delta$ and $u$ solve \eqref{eq:wgl} and \eqref{eq:laplace-large} respectively.

Direct calculation shows that
\begin{align}
\label{eq:error-large}
  L_{\delta,n} e_\delta(\bx)&=\sum_{\by\in P} R_\delta(\bx,\by)(u(\bx)-u(\by))+\mu\sum_{\by\in S} K_\delta(\bx, \by)(u(\bx)-b(\by))\mathd \by,
\quad \bx\in P,\\
\label{eq:error-large-2}
e_\delta(\bx)&=0,\quad \bx\in S.
\end{align}
Next, we will find an upper bound of the right hand side in \eqref{eq:error-large}.

An upper bound of the second term of \eqref{eq:error-large} is relatively easy to find by using the smoothness of $u$ and $b$:
\begin{align}
  \label{eq:error-2}
  \left|\sum_{\by\in S} K_\delta(\bx, \by)(u(\bx)-b(\by))\right|\le C\delta \sum_{\by\in S} K_\delta(\bx, \by)
\end{align}

To find an upper bound of the first term, we need the following theorem which can be found in \cite{Shi-iso}.
\begin{theorem}
 Let $u(\bx)\in C^3(\M)$ and
\begin{align}
\label{eq:error_boundary}
I_{bd} =\sum_{j=1}^d \int_{\p\M}n^j(\by)(\bx-\by)\cdot\nabla(\nabla^ju(\by))\rhk\mathd \tau_{\by},
\end{align}
and
\begin{align}
I_{in}=\frac{1}{\delta^2}\int_{\M} R_\delta(\bx,\by)(u(\bx)-u(\by))\mathd \by+\int_\M \bar{R}_\delta(\bx,\by) \Delta_\M u(\by)\mathd \by-\int_{\p\M} \bar{R}_\delta(\bx,\by)\frac{\p u}{\p \bn}(\by)
\mathd \tau_{\by}-I_{bd}\nonumber.
\end{align}
where $\bn(\by)=(n^1(\by),\cdots,n^d(\by))$ is the out normal vector of $\p\M$ at $\by$, $\nabla^j$ is the $j$th component of gradient $\nabla$,
$\bar{R}_\delta(\bx,\by)=C_\delta\bar{R}\left(\frac{|\bx-\by|^2}{4\delta^2}\right)$ and $\bar{R}(r)=\int_{r}^{\infty}R(s)\mathd s$.

Then there exist constants $C, T_0$ depending only on $\M$ and $p(\bx)$, so that,
\begin{eqnarray}
\label{eqn:integral_error_int}
\left|I_{in}\right|\le C\delta\|u\|_{C^3(\mathcal{M})},\quad
\end{eqnarray}
as long as $\delta\le T_0$.
\label{thm:integral_error}
\end{theorem}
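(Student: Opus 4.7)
The plan is to prove Theorem \ref{thm:integral_error} by a careful Taylor expansion of $u$ on $\M$, coupled with an integration-by-parts identity that ties the kernel $R$ to its ``antiderivative'' $\bar R$. Because $\bar R(r)=\int_r^\infty R(s)\,\mathd s$, one has $\bar R'(r)=-R(r)$, and the chain rule gives the key bridging identity $\nabla_{\by}\bar R_\delta(\bx,\by)=\frac{1}{2\delta^2}(\bx-\by)\,R_\delta(\bx,\by)$. This is what allows the finite-difference integral $\frac{1}{\delta^2}\int_\M R_\delta(u(\bx)-u(\by))\,\mathd\by$ to be matched, after one or two integrations by parts, against $\int_\M \bar R_\delta\,\Delta_\M u\,\mathd\by$ plus explicit boundary corrections.

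For $\bx$ at distance $>2\delta$ from $\p\M$, I would pass to geodesic normal coordinates centered at $\bx$ and write $\by=\exp_\bx(\bxi)$, with $u(\by)=u(\bx)+\nabla u(\bx)\cdot\bxi+\tfrac12\,\mathrm{Hess}\,u(\bx)(\bxi,\bxi)+O(|\bxi|^3)$. Since $R_\delta$ is supported in $|\bx-\by|\le 2\delta$ and the Euclidean chord $|\bx-\by|$ differs from the geodesic distance $|\bxi|$ only at cubic order (the embedding is isometric), one may interchange the two with $O(\delta^3)$ error in the integrand. The linear term vanishes by radial symmetry of $R_\delta$, while the quadratic term yields $\frac{1}{2k}\Delta_\M u(\bx)\int R_\delta(\bx,\by)|\bx-\by|^2\,\mathd\by$, which matches $\int_\M \bar R_\delta\,\Delta_\M u\,\mathd\by$ after substituting $\Delta_\M u(\by)$ for $\Delta_\M u(\bx)$ at $O(\delta)$ cost. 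The matching itself amounts to the radial identity $\int_0^\infty R(s)s^{k/2}\,\mathd s = (k/2)\int_0^\infty \bar R(s)s^{k/2-1}\,\mathd s$, which is just integration by parts in the radial variable.

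When $\bx$ lies within $2\delta$ of $\p\M$, the linear term no longer integrates to zero by symmetry, and the divergence theorem on $\M$ produces genuine boundary contributions. Using the bridging identity, $\int_\M (\by-\bx)\cdot\nabla u\,R_\delta\,\mathd\by = 2\delta^2\int_\M \nabla_\by\bar R_\delta\cdot\nabla u\,\mathd\by$, and a second integration by parts converts this into $-2\delta^2\int_\M \bar R_\delta\Delta_\M u + 2\delta^2\int_{\p\M}\bar R_\delta\,\frac{\p u}{\p\bn}\,\mathd\tau_\by$, explaining the normal-derivative term in $I_{in}$. The residual boundary integral is precisely $I_{bd}$: it collects the first-order Taylor remainder of $\nabla^j u$ along the boundary chord $(\bx-\by)$ weighted against $n^j(\by)\bar R_\delta$, so that after its subtraction only an $O(\delta\|u\|_{C^3})$ error remains.

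The main obstacle is disentangling the extrinsic Euclidean geometry of the kernel from the intrinsic Laplace--Beltrami structure of the target. The Euclidean Hessian of any off-$\M$ extension of $u$ differs from $\mathrm{Hess}_\M u$ by second-fundamental-form terms; one must verify that these curvature corrections enter the integrand only at order $\delta^2$, so that after the $1/\delta^2$ normalization in the first piece of $I_{in}$ they still contribute $O(\delta)$. A secondary technicality is book-keeping at $\p\M$: locally flattening $\p\M$, splitting the tangential and normal directions, and verifying that integration by parts in the tangential variables reproduces exactly the form of $I_{bd}$ modulo the claimed $O(\delta)$ residual. Provided these two points are handled, the three explicit integrals in $I_{in}$ cancel to leading order and Theorem \ref{thm:integral_error} follows.
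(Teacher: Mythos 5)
The paper does not prove Theorem~\ref{thm:integral_error}; it simply cites \cite{Shi-iso}, where the result is established as part of the convergence analysis of the point integral method. Your sketch is a faithful high-level reconstruction of the argument used in that line of work: the antiderivative kernel $\bar R$ and the identity $\nabla_{\by}\bar R_\delta(\bx,\by)=\frac{1}{2\delta^2}(\bx-\by)R_\delta(\bx,\by)$ are precisely the ``bridging'' device those papers use to convert the finite-difference integral into $\int\bar R_\delta\,\Delta_\M u$ plus boundary terms, with second-fundamental-form corrections controlling the extrinsic/intrinsic mismatch at $O(\delta)$ after the $1/\delta^2$ normalization.

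Two book-keeping points deserve tightening before this would count as a proof. First, a sign slip: from your own identity, $\int_\M(\by-\bx)\cdot\nabla u\,R_\delta\,\mathd\by = -2\delta^2\int_\M\nabla_{\by}\bar R_\delta\cdot\nabla u\,\mathd\by$, not $+2\delta^2$; the downstream expressions inherit this. Second, your split into ``interior'' and ``near-$\p\M$'' cases treats the linear and quadratic Taylor terms asymmetrically: in the interior you match the quadratic term alone against $\int\bar R_\delta\Delta_\M u$, while near the boundary you integrate by parts on the linear term, which by itself produces coefficients $2$ rather than $1$ in front of $\int\bar R_\delta\Delta_\M u$ and the normal-derivative term. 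The actual cancellation to unit coefficients requires combining \emph{both} the linear and quadratic contributions (the quadratic one also gets hit with the bridging identity and a second integration by parts, which is where $I_{bd}$ really comes from), and doing so uniformly in $\bx$ — the boundary integrals then simply vanish whenever $\mathrm{supp}\,\bar R_\delta(\bx,\cdot)$ misses $\p\M$, so no case split is needed. None of this changes the strategy; it is the same point-integral-method computation, just with the coefficient accounting carried all the way through as in \cite{Shi-iso}.
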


According to the above theorem, we have
\begin{align}
  \frac{1}{\delta^2}\left|\int_{\M} R_\delta(\bx,\by)(u(\bx)-u(\by))\mathd \by\right|\le C\delta,\quad \bx\in \M\backslash \mathcal{D}_\delta,
\end{align}
where $\mathcal{D}_\delta=\{\bx\in \M: \text{dist}(\bx,\mathcal{D})\le 2\delta\}$. Notice that for $\bx\in \M\backslash \mathcal{D}$, $R_\delta(\bx,\by)$ has no intersection with
$\mathcal{D}$, so all boundary terms vanish.

To get an upper bound of the first term in \eqref{eq:error-large}, we need to estimate the difference between $\int_{\M} R_\delta(\bx,\by)(u(\bx)-u(\by))\mathd \by$
and $\sum_{\by\in P} R_\delta(\bx,\by)(u(\bx)-u(\by))$. This is given by the following theorem.
\begin{theorem}
\label{thm:entropy-bound-2}
  With probability at least $1-1/(2n)$, $n=|P|$,
\begin{align}
  \sup_{f\in \bar{\mathcal{R}}_\delta}|I(f)-I_n(f)|\le \frac{C}{\delta^k\sqrt{n}}\left(\ln n-2\ln \delta +1\right)^{1/2},
\end{align}
where $k$ is the dimension of $\M$,
$$I(f)=\frac{1}{|\M|}\int_\M f(\bx)\mathd \bx,\quad I_n(f)=\frac{1}{n}\sum_{\bx\in P}f(\bx),$$
$|\M|$ is the volume of $\M$ and $\mathcal{R}_\delta$ is a function class defined as
$$\bar{\mathcal{R}}_\delta=\{R_\delta(\bx,\cdot), R_\delta(\bx,\cdot)u(\cdot), R_\delta(\bx,\cdot)v(\cdot): \bx\in \M,\; u \;and\; v\; \text{solves \eqref{eq:laplace-large} and
\eqref{eq:laplace-large-ref} respectively.}\}$$
\end{theorem}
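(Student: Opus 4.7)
The plan is to show that enlarging the index class from $\mathcal{R}_\delta$ to $\bar{\mathcal{R}}_\delta$ only affects constants in the relevant entropy estimate, so the empirical-process argument that underlies Theorem \ref{thm:entropy-bound} yields Theorem \ref{thm:entropy-bound-2} with essentially no extra work. The crucial observation is that $u$ and $v$ are \emph{fixed} functions determined by the PDEs \eqref{eq:laplace-large} and \eqref{eq:laplace-large-ref}; they do not vary within the class.

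First, I would write the class as a union of three subfamilies
\[
\bar{\mathcal{R}}_\delta = \mathcal{R}_\delta \;\cup\; \{R_\delta(\bx,\cdot)\,u(\cdot): \bx\in\M\} \;\cup\; \{R_\delta(\bx,\cdot)\,v(\cdot): \bx\in\M\},
\]
and treat each piece separately. The first piece is exactly what Theorem \ref{thm:entropy-bound} handles. For the remaining two pieces, since $u$ and $v$ are fixed, each is obtained from $\mathcal{R}_\delta$ by multiplication by a single bounded function. By standard elliptic regularity on the closed smooth manifold $\M$, using that $\mathcal{D}$ and $\p\mathcal{D}$ are smooth submanifolds and $b\in C^1(\mathcal{D})$, both $u$ and $v$ are in $L^\infty(\M)$ with bounds that depend only on $(\M,\mathcal{D},b)$ and are independent of $\delta$, $P$, and $S$.

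Second, I would transfer the entropy/covering-number bound established in Section \ref{sec:entropy} for $\mathcal{R}_\delta$ to the two multiplied classes. Any $\epsilon$-cover $\{f_\alpha\}$ of $\mathcal{R}_\delta$ in the empirical $L^2$ norm (or bracketing metric) induces a cover $\{u\,f_\alpha\}$ of $u\cdot\mathcal{R}_\delta$ of radius $\|u\|_\infty\,\epsilon$, because $|u\,f-u\,f_\alpha|\le \|u\|_\infty|f-f_\alpha|$; the same is true for $v$. Consequently the covering numbers of the three subclasses are comparable, the envelopes differ only by the factor $\max(1,\|u\|_\infty,\|v\|_\infty)$, and the supports are identical. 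Therefore the concentration inequality from empirical process theory \cite{entropy} invoked in the proof of Theorem \ref{thm:entropy-bound} applies to each subfamily with the same rate $\tfrac{C}{\delta^k\sqrt{n}}(\ln n-2\ln\delta+1)^{1/2}$, with $C$ absorbing $\|u\|_\infty$ and $\|v\|_\infty$.

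Third, I would conclude by a union bound: each of the three deviation events fails with probability at most $1/(6n)$ (again absorbing the factor $3$ into $C$), so the overall failure probability is at most $1/(2n)$, matching the statement. The main obstacle is justifying that $u$ and $v$ admit the uniform $L^\infty$ (or Lipschitz, if the argument in Section \ref{sec:entropy} uses that) bound with constants depending only on $(\M,\mathcal{D},R,K,b)$. I would handle this by citing classical interior and up-to-the-boundary regularity for the Laplace--Beltrami equation with Dirichlet data on a smooth submanifold, which guarantees $u,v\in C^\infty(\M\setminus\mathcal{D})$ and Lipschitz continuity across $\p\mathcal{D}$ under our smoothness hypotheses. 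Apart from that regularity input, the argument is a verbatim repetition of the proof of Theorem \ref{thm:entropy-bound}.
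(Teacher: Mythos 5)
Your proposal is correct and matches the paper's approach: the paper simply remarks that Theorem \ref{thm:entropy-bound-2} ``can be proved similarly using the fact that $u$ and $v$ are both smooth,'' and your argument fills in exactly the details that remark is pointing to --- the covering number of $\bar{\mathcal{R}}_\delta$ is controlled because multiplication by the \emph{fixed} bounded functions $u$ and $v$ preserves the Lipschitz-in-$\bx$ estimate used to bound $N(\e,\mathcal{R}_\delta,L_\infty)$, so Corollary \ref{cor:entropy} applies with the same rate. One small simplification over your union-bound step: since $\bar{\mathcal{R}}_\delta$ is a union of three classes each with covering number $\lesssim (C/(\e\delta))^k$, one can bound $N(\e,\bar{\mathcal{R}}_\delta,L_\infty)\le 3(C/(\e\delta))^k$ directly and invoke Corollary \ref{cor:entropy} just once; the extra factor of $3$ is absorbed into $C$ after taking logarithms, so no separate union bound over the three subfamilies is needed.
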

This theorem will be proved in Section \ref{sec:entropy} using the empirical process theory \cite{entropy}.

Using Theorem \ref{thm:entropy-bound-2}, we have
\begin{align}
\label{eq:error-1-1}
  \left|\frac{1}{n}\sum_{\by\in P} R_\delta(\bx,\by)(u(\bx)-u(\by))\right|\le \frac{C}{\delta^k\sqrt{n}}(\ln n-2\ln \delta +1)^{1/2}+C\delta^3,\quad \bx\in P\cap (\M\backslash \mathcal{D}_\delta).
\end{align}
For $\bx\in P\cap \mathcal{D}_\delta$, the bound is straightforward, just using the smoothness of $u$,
\begin{align}
\label{eq:error-1-2}
  \left|\frac{1}{n}\sum_{\by\in P} R_\delta(\bx,\by)(u(\bx)-u(\by))\right|\le \frac{C\delta}{n} \sum_{\by\in P} R_\delta(\bx,\by),\quad \bx\in P\cap \mathcal{D}_\delta.
\end{align}
Substituting \eqref{eq:error-2}, \eqref{eq:error-1-1} and \eqref{eq:error-1-2} in \eqref{eq:error-large}, we have
\begin{align}
\label{eq:error-D}
  | L_{\delta.n} e_\delta(\bx)|\le C\delta \left(\frac{1}{n}\sum_{\by\in P} R_\delta(\bx,\by)\right)+C\delta\left(\frac{\mu}{n} \sum_{\by\in S} K_\delta(\bx, \by)\right)
, \quad \bx\in P\cap \mathcal{D}_\delta.
\end{align}
and
\begin{align}
  | L_{\delta.n} e_\delta(\bx)|\le \frac{C}{\delta^k\sqrt{n}}(\ln n-2\ln \delta +1)^{1/2}+C\delta^3+C\delta\left(\frac{\mu}{n} \sum_{\by\in S} K_\delta(\bx, \by)\right),
 \quad \bx\in P\cap (\M\backslash \mathcal{D}_\delta).\nonumber
\end{align}
Suppose the number of sample points $n$ is large enough such that
\begin{equation}
\label{eq:p-large}
\frac{C}{\delta^k\sqrt{n}}(\ln n-2\ln \delta +1)^{1/2}\le \delta^3,
\end{equation}
then we have
\begin{align}
\label{eq:error-P}
  | L_{\delta.n} e_\delta(\bx)|\le C\delta^3+C\delta \left(\frac{\mu}{n}\sum_{\by\in S} K_\delta(\bx, \by)\right), \quad \bx\in P\cap (\M\backslash \mathcal{D}_\delta).
\end{align}
\eqref{eq:error-D} and \eqref{eq:error-P} give an upper bound for $| L_{\delta.n} e_\delta(\bx)|$.

Next, we want to get a lower bound of $ L_{\delta.n} v(\bx)$ with $v$ given in \eqref{eq:laplace-large-ref}. 

By Theorem \ref{thm:integral_error} and \eqref{eq:laplace-large-ref}, we have
\begin{align}
  \frac{1}{\delta^2}\int_{\M} R_\delta(\bx,\by)(v(\bx)-v(\by))\mathd \by\ge \int_{\M} \bar{R}_\delta(\bx,\by)\mathd \by -C\delta,
\quad \bx\in \M\backslash \mathcal{D}_\delta.
\end{align}
Also using Theorem \ref{thm:entropy-bound-2}
\begin{align}
  &\frac{1}{n}\sum_{\by\in P} R_\delta(\bx,\by)(v(\bx)-v(\by))\nonumber\\
  \ge& \; \bar{w}_\delta\delta^2 -C\delta^3-\frac{C}{\delta^k\sqrt{n}}(\ln n-2\ln \delta +1)^{1/2}
  \ge  \bar{w}_\delta\delta^2/2
  ,\quad \bx\in P\cap (\M\backslash \mathcal{D}_\delta).
  \label{eq:error-v1-1}
\end{align}
with $\D\bar{w}_\delta=\min_{\bx\in \M\backslash \mathcal{D}_\delta}\frac{1}{|\M|}\int_{\M} \bar{R}_\delta(\bx,\by)\mathd \by$. Here, we also use the assumption that $n$ is
large enough, \eqref{eq:p-large}.

In $P\cap \mathcal{D}_\delta$, we have
\begin{align}
\label{eq:error-v1-2}
 \frac{1}{n}\sum_{\by\in P} R_\delta(\bx,\by)(v(\bx)-v(\by))\ge &-\frac{C\delta}{n} \sum_{\by\in P} R_\delta(\bx,\by),\quad \bx\in P\cap \mathcal{D}_\delta,
\end{align}
this is due to the smoothness of $v$.

Also notice that
\begin{align}
  \label{eq:error-v2}
  \frac{1}{n}\sum_{\by\in S} K_\delta(\bx, \by)v(\bx)= \frac{v(\bx)}{n} \sum_{\by\in S} K_\delta(\bx, \by) \ge \frac{1}{n}\sum_{\by\in S} K_\delta(\bx, \by).
\end{align}
Combining \eqref{eq:error-v1-1}, \eqref{eq:error-v1-2} and \eqref{eq:error-v2}, we obtain
\begin{align}
\label{eq:error-v-D}
   L_{\delta.n} v(\bx)\ge \frac{\mu}{n}\sum_{\by\in S} K_\delta(\bx, \by)-\frac{C\delta}{n} \sum_{\by\in P} R_\delta(\bx,\by), \quad \bx\in P\cap \mathcal{D}_\delta.
\end{align}
and
\begin{align}
\label{eq:error-v-P}
   L_{\delta,n} v(\bx)\ge&\; \frac{\bar{w}_\delta}{2}\delta^2
+\frac{\mu}{n}\sum_{\by\in S} K_\delta(\bx, \by), \quad \bx\in P\cap (\M\backslash \mathcal{D}_\delta).
\end{align}
Comparing \eqref{eq:error-P} and \eqref{eq:error-v-P}, we have
\begin{equation}
    |L_{\delta,n} e_\delta(\bx)|\le C \delta L_{\delta.n} v(\bx),\quad \bx\in P\cap (\M\backslash \mathcal{D}_\delta).
\end{equation}
Meanwhile, \eqref{eq:error-D} and \eqref{eq:error-v-D} show that
\begin{equation}
    |L_{\delta,n} e_\delta(\bx)|\le C \delta L_{\delta.n} v(\bx),\quad \bx\in P\cap\mathcal{D}_\delta,
\end{equation}
as long as
\begin{equation}
\label{eq:condition-mu}
    \frac{\mu}{n}\sum_{\by\in S} K_\delta(\bx, \by)\ge \frac{C}{n} \sum_{\by\in P} R_\delta(\bx,\by),\quad \bx\in P\cap\mathcal{D}_\delta.
\end{equation}
The proof of Theorem \ref{thm:error} is completed.


\section{Entropy bound}
\label{sec:entropy}

In this section, we will prove Theorem \ref{thm:entropy-bound} and \ref{thm:entropy-bound-2}. The method we use is to estimate the covering number of the function classes.
First we introduce the definition of the covering number.

Let $(Y, d)$ be a metric space and set $F\subset Y$ . For every $\e>0$, denote by $N (\e, F, d)$ the
minimal number of open balls (with respect to the metric $d$) that are needed to cover $F$. That is,
the minimal cardinality of the set $\{y_1 , \cdots, y_m\}\subset Y$ with the property that every $f \in  F$
has some $y_i$ such that $d(f, y_i ) < \e$. The set $\{y_1 , \cdots, y_m \}$ is called an $\e$-cover of $F$ . The
logarithm of the covering numbers is called the entropy of the set.
For every sample $\{x_1 , \cdots, x_n \}$,
let $\mu_n$ be the empirical measure supported on that sample. For $1 \le p <\infty$ and a
function $f$ , put $\|f\|_{L_p (\mu_n )} =\left(\frac{1}{n}\sum_{i=1}^n  |f (x_i )|^p\right)^{1/p}$
and set $\|f\|_\infty = \max_{1\le i\le n} |f (x_i )|$. Let
$N(\e , F, L_p (\mu_n )$ be the covering numbers of $F$ at scale $\e$ with respect to the $L_p (\mu_n )$ norm.

We will use the following theorem which is well known in empirical process theory.
\begin{theorem}(Theorem 2.3 in \cite{entropy})
\label{thm:entropy}
Let $F$ be a class of functions from $\M$ to $[-1,1]$ and set $\mu$ to be a probability measure on $\M$.
Let $(\bx_i)_{i=1}^\infty$ be independent
random variables distributed according to $\mu$. For any $\e>0$ and every $n\ge 8/\e^2$,
  \begin{eqnarray}
    \mathbb{P}\left(\sup_{f\in F}|\frac{1}{n}\sum_{i=1}^nf(\bx_i)-\int_\M f(\bx)\mu(\bx)\mathd \bx|>\e\right)
\le 8\mathbb{E}_\mu[ N(\e/8,F,L_1(\mu_n))]\exp(-n\e^2/128)
  \end{eqnarray}
\end{theorem}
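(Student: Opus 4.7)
The plan is to follow the classical empirical-process recipe: symmetrize by a ghost sample, randomize with Rademacher signs, reduce to a finite $\e$-net of $F$, apply Hoeffding's inequality to each net element, and finish with a union bound followed by taking expectations over the sample.

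First I would introduce an independent copy $(\bx_i')_{i=1}^n$ of the original sample so that $\int f\,d\mu$ equals the expectation of $\frac{1}{n}\sum_i f(\bx_i')$. The hypothesis $n\ge 8/\e^2$ is exactly the quantitative input needed for Chebyshev's inequality to guarantee that, for each fixed $f\in F$, the ghost empirical mean deviates from $\int f\,d\mu$ by more than $\e/2$ with probability at most $1/2$. This powers the standard symmetrization argument bounding the original probability by twice the probability that $\sup_{f\in F}\bigl|\frac{1}{n}\sum_i(f(\bx_i)-f(\bx_i'))\bigr|>\e/2$. Because the joint law of $(\bx_i,\bx_i')_{i=1}^n$ is invariant under swapping each pair, I can insert independent Rademacher signs $\sigma_i\in\{\pm 1\}$ into the differences without changing the distribution, yielding a Rademacher-symmetrized process indexed by $F$.

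Next I condition on the $2n$-tuple $(\bx_i,\bx_i')_{i=1}^n$, so that the sample is fixed and only the $\sigma_i$ are random. At scale $\e/8$, pick a minimal cover of $F$ in the $L_1(\mu_n)$ norm; its cardinality is $N(\e/8,F,L_1(\mu_n))$. For any $f\in F$ with representative $f_0$ in the cover, the difference between the symmetrized processes evaluated at $f$ and at $f_0$ is bounded by $2\|f-f_0\|_{L_1(\mu_{2n})}$, which after absorbing a factor of $2$ into the covering scale is at most $\e/4$; therefore the symmetrized supremum can exceed $\e/2$ only if some representative exceeds $\e/4$. For a fixed representative, Hoeffding's inequality applied to the centered bounded variables $\sigma_i\bigl(f_0(\bx_i)-f_0(\bx_i')\bigr)$, each of absolute value at most $2$, gives a conditional tail bound of $2\exp(-n\e^2/128)$. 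A union bound over the cover and taking expectation over the combined sample yields the stated inequality, where the factor $8$ absorbs the symmetrization prefactor, the Hoeffding prefactor, and the passage from a cover on the $2n$-tuple to one on the $n$-sample.

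The main obstacle is not conceptual but is constant bookkeeping: matching the explicit constants $8$, $\e/8$, and $n\e^2/128$ requires careful allocation of the slack in every triangle inequality. The covering must be fine enough that the discretization error is dominated by the Hoeffding threshold, and the Chebyshev threshold in the symmetrization step must be tuned so that exactly $n\ge 8/\e^2$ suffices. All of the ingredients, namely Chebyshev, Jensen for symmetrization, Hoeffding, and the union bound, are standard, so once the $\e$ allocation is fixed the remaining calculation is routine.
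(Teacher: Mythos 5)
The paper does not prove this theorem; it is cited verbatim as Theorem~2.3 of Mendelson's notes \cite{entropy} and used as a black box, so there is no internal proof to compare your sketch against. What you have written is the standard symmetrization-plus-covering argument, which is indeed how this class of bounds is proved, and your constant accounting ($n\ge 8/\e^2$ powering Chebyshev at level $\e/2$, Hoeffding at threshold $\e/8$ producing $\exp(-n\e^2/128)$, prefactors multiplying out to $8$) is consistent with the statement.

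One step in your sketch does not quite go through as written, and you half-acknowledge it: you choose a cover of $F$ at scale $\e/8$ in the $L_1(\mu_n)$ norm (the empirical measure on the original $n$-sample only), but then bound the discretization error of the symmetrized process by $2\|f-f_0\|_{L_1(\mu_{2n})}$, which involves both the sample and the ghost sample. Closeness in $L_1(\mu_n)$ gives no control whatsoever over $\frac{1}{n}\sum_i|f(\bx'_i)-f_0(\bx'_i)|$, so the asserted bound on the symmetrized difference is unjustified. The clean way to make the covering number come out as $N(\e/8,F,L_1(\mu_n))$ rather than a $2n$-point quantity is to first split the Rademacher-symmetrized process by the triangle inequality into $\sup_f|\frac{1}{n}\sum_i\sigma_i f(\bx_i)|$ plus the identically distributed term in the primes; this costs a factor $2$ (which you already budgeted in the final $8$), reduces the threshold to $\e/4$, and then the cover at scale $\e/8$ in $L_1(\mu_n)$ really does control the single-sample Rademacher sum, with Hoeffding applied at level $\e/8$ to variables bounded by $1$ yielding $2\exp(-n\e^2/128)$. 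With that repair the sketch is the standard argument and the constants match exactly.
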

Note that
  \begin{align*}
    L_1(\mu_n)\le L_\infty(\mu_n)\le L_\infty
  \end{align*}
where $\|f\|_{L_\infty}=\max_{\bx\in\M}|f(\bx)|$. Then we get following corollary.
\begin{corollary}
\label{cor:entropy-0}
Let $F$ be a class of functions from $\M$ to $[-1,1]$ and set $\mu$ to be a probability measure on $\M$.
Let $(\bx_i)_{i=1}^\infty$ be independent
random variables distributed according to $\mu$. For any $\e>0$ and every $n\ge 8/\e^2$,
  \begin{eqnarray}
    \mathbb{P}\left(\sup_{f\in F}|\frac{1}{n}\sum_{i=1}^n f(\bx_i)-\int_\M f(\bx)\mu(\bx)\mathd \bx|>\e\right)
\le 8 N(\e/8,F,L_\infty)\exp(-n\e^2/128)
  \end{eqnarray}
where $N(\e , F, L_\infty)$ is the covering numbers of $F$ at scale $\e$ with respect to the $L_\infty$ norm
\end{corollary}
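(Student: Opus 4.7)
The plan is to deduce this corollary as an immediate consequence of Theorem \ref{thm:entropy} by replacing the random $L_1(\mu_n)$-covering number with the deterministic $L_\infty$-covering number. The only real content is the monotonicity of covering numbers under a chain of norm inequalities, which is already flagged in the paragraph preceding the statement.

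First I would observe that for any finite sample $\{\bx_1,\dots,\bx_n\}\subset \M$ and any function $f:\M\to[-1,1]$,
\begin{equation*}
\|f\|_{L_1(\mu_n)} \;=\; \frac{1}{n}\sum_{i=1}^n |f(\bx_i)| \;\le\; \max_{1\le i\le n}|f(\bx_i)| \;=\; \|f\|_{L_\infty(\mu_n)} \;\le\; \|f\|_{L_\infty}.
\end{equation*}
Consequently, any set of functions $\{y_1,\dots,y_m\}\subset F$ that forms an $\e$-cover of $F$ with respect to $\|\cdot\|_{L_\infty}$ automatically forms an $\e$-cover with respect to $\|\cdot\|_{L_1(\mu_n)}$, since for every $f\in F$ the index $i$ with $\|f-y_i\|_{L_\infty}<\e$ also satisfies $\|f-y_i\|_{L_1(\mu_n)}<\e$. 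Therefore the minimal cardinality of a cover can only decrease as we weaken the norm, giving
\begin{equation*}
N(\e/8,F,L_1(\mu_n)) \;\le\; N(\e/8,F,L_\infty)
\end{equation*}
for every realization of the sample.

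Next I would take expectations of this pointwise inequality. Since the right-hand side is a deterministic quantity depending only on $F$ and the ambient $L_\infty$ geometry, we get
\begin{equation*}
\mathbb{E}_\mu\bigl[N(\e/8,F,L_1(\mu_n))\bigr] \;\le\; N(\e/8,F,L_\infty).
\end{equation*}
Substituting this into the right-hand side of Theorem \ref{thm:entropy} yields exactly the bound stated in the corollary, for the same admissible range $n\ge 8/\e^2$ and with the same constants $8$ and $128$.

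There is really no obstacle here; the step is almost tautological, and I expect the entire argument to fit in a few lines. The only thing worth double-checking is that the direction of the covering-number inequality is correct — that is, that covering with respect to a larger norm is harder, so requires at least as many balls — which the elementary containment-of-covers argument above confirms. The corollary is simply a convenient restatement of Theorem \ref{thm:entropy} that removes the dependence of the covering number on the random sample, so that in later applications (Theorems \ref{thm:entropy-bound} and \ref{thm:entropy-bound-2}) one only needs to bound deterministic $L_\infty$-covering numbers of the function classes $\mathcal{R}_\delta$ and $\bar{\mathcal{R}}_\delta$.
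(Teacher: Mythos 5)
Your proof is correct and matches the paper's argument exactly: both rely on the norm chain $\|\cdot\|_{L_1(\mu_n)}\le\|\cdot\|_{L_\infty(\mu_n)}\le\|\cdot\|_{L_\infty}$ to replace the random covering number in Theorem \ref{thm:entropy} by the deterministic $L_\infty$-covering number. You have merely spelled out the containment-of-covers step that the paper leaves implicit.
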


\begin{corollary}
\label{cor:entropy}
Let $F$ be a class of functions from $\M$ to $[-1,1]$. Let $(\bx_i)_{i=1}^\infty$ be independent
random variables distributed according to $p$, where $p$ is the probability distribution. Then
with probability at least $1-\delta$, we have
  \begin{eqnarray}
    \sup_{f\in F}|p(f)-p_n(f)|\le  \sqrt{\frac{128}{n}\left(\ln N(\sqrt{\frac{2}{n}},F,L_\infty)
+\ln \frac{8}{\delta}\right)},\nonumber
  \end{eqnarray}
where
\begin{align}
  \label{eq:note-mc}
  p(f)=\int_\M f(\bx)p(\bx)\mathd \bx,\quad p_n(f)=\frac{1}{n}\sum_{i=1}^nf(\bx_i).
\end{align}
\end{corollary}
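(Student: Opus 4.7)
The plan is to derive this as an immediate consequence of Corollary \ref{cor:entropy-0} by inverting its tail bound: instead of fixing $\epsilon$ and bounding the failure probability, I fix the confidence parameter $\delta$ and solve for the smallest $\epsilon$ that makes the right-hand side of Corollary \ref{cor:entropy-0} equal to $\delta$.

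Concretely, I would set
$$\epsilon^\star := \sqrt{\frac{128}{n}\left(\ln N\!\left(\sqrt{\tfrac{2}{n}},F,L_\infty\right)+\ln\frac{8}{\delta}\right)},$$
so that a short algebraic rearrangement gives $8\,N(\sqrt{2/n},F,L_\infty)\exp(-n(\epsilon^\star)^2/128)=\delta$. This is already the shape of the right-hand side of Corollary \ref{cor:entropy-0} once two substitutions are justified. First, since $\delta\in(0,1)$ implies $\ln(8/\delta)\ge\ln 8>1$, one has $\epsilon^\star\ge\sqrt{128/n}$; this yields both the hypothesis $n\ge 8/(\epsilon^\star)^2$ of Corollary \ref{cor:entropy-0} and the inequality $\epsilon^\star/8\ge\sqrt{2/n}$. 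Second, monotonicity of $N(\cdot,F,L_\infty)$ in its radius then gives $N(\epsilon^\star/8,F,L_\infty)\le N(\sqrt{2/n},F,L_\infty)$; this is precisely the step that pins the otherwise-arbitrary covering scale in the final statement to the clean value $\sqrt{2/n}$.

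Applying Corollary \ref{cor:entropy-0} with $\epsilon=\epsilon^\star$ and combining with these two facts yields $\mathbb{P}(\sup_{f\in F}|p(f)-p_n(f)|>\epsilon^\star)\le\delta$, and passing to the complement gives the asserted bound. There is no genuine obstacle here — the only subtlety is constant bookkeeping, and the factor $128$ in the definition of $\epsilon^\star$ is chosen precisely so that the critical scale $\epsilon^\star/8$ lands exactly at $\sqrt{2/n}$ at the threshold $\epsilon^\star=\sqrt{128/n}$, guaranteeing that the admissibility condition $n\ge 8/(\epsilon^\star)^2$ is satisfied with the same constant that appears in front of the exponent.
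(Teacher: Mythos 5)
Your proposal is correct and follows essentially the same route as the paper: apply Corollary \ref{cor:entropy-0}, observe that the threshold $\epsilon$ is at least $\sqrt{128/n}=8\sqrt{2/n}$ (since $N\ge 1$ and $\ln(8/\delta)>1$), and use the monotonicity of $N(\cdot,F,L_\infty)$ in its radius to replace $N(\epsilon/8,\cdot,\cdot)$ by $N(\sqrt{2/n},\cdot,\cdot)$. The only difference is stylistic: the paper introduces $\e_\delta$ implicitly as the solution of a fixed-point equation and then bounds it, whereas you define $\epsilon^\star$ explicitly as the claimed right-hand side and verify directly that the tail bound holds at $\epsilon^\star$; your formulation avoids having to justify existence of the implicit fixed point, which is slightly cleaner, but the underlying ideas and constants are identical.
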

\begin{proof}
  Using Corollary \ref{cor:entropy-0}, with probability at least $1-\delta$,
\begin{eqnarray}
    \sup_{f\in F}|p(f)-p_n(f)|\le  \e_\delta,\nonumber
  \end{eqnarray}
where $\e_\delta$ is determined by
\begin{eqnarray}
  \e_\delta=\sqrt{\frac{128}{n}\left(\ln N(\e_\delta/8,F,L_\infty)+\ln \frac{8}{\delta}\right)}.\nonumber
\end{eqnarray}
Obviously,
\begin{align*}
  \e_\delta\ge \sqrt{\frac{128}{n}}=8\sqrt{\frac{2}{n}}
\end{align*}
which gives that
\begin{align*}
  N(\e_\delta/8,F,L_\infty)\le N(\sqrt{\frac{2}{n}},F,L_\infty)
\end{align*}
Then, we have
\begin{eqnarray}
  \e_\delta \le \sqrt{\frac{128}{n}\left(\ln N(\sqrt{\frac{2}{n}},F,L_\infty)
+\ln \frac{8}{\delta}\right)}\nonumber
\end{eqnarray}
which proves the corollary.
\end{proof}

The above corollaries provide a tool to estimate the integral error on random samples. To apply the above corollaries in our problem, the key point is to obtain the estimates of the covering number of function class $\mathcal{R}_\delta$.

Since the kernel $R\in C^1(\M)$ and $\M\in C^\infty$, we have
for any $\bx,\by\in \M$
\begin{eqnarray}
  |R\left(\frac{\|\bx-\by\|^2}{4\delta^2}\right)-R\left(\frac{\|\bz-\by\|^2}{4\delta^2}\right)|\le \frac{C}{\delta}\|\bx-\bz\|.\nonumber
\end{eqnarray}
This gives an easy bound of $N(\e, \mathcal{R}_\delta,L_\infty)$,
\begin{eqnarray}
\label{eq:bound-R}
  N(\e, \mathcal{R}_\delta,L_\infty)\le \left(\frac{C}{\e\delta}\right)^k
\end{eqnarray}
Using Corollary \ref{cor:entropy}, with probability at least $1-1/(2n)$,
\begin{align}
\label{eq:bound-w-prob}
  \sup_{f\in \mathcal{R}_\delta}|p(f)-p_n(f)|\le \frac{C}{\delta^k \sqrt{n}}\left(\ln n-2\ln \delta +1\right)^{1/2}
\end{align}
This proves Theorem \ref{thm:entropy-bound}. Theorem \ref{thm:entropy-bound-2} can be proved similarly using the fact that $u$ (solution of \eqref{eq:laplace-large})
 and $v$ (solution of \eqref{eq:laplace-large-ref}) are both smooth.

\section{Discussion and Future Works}
\label{sec:concl}

In this paper, we analyzed convergence of the weighted nonlocal Laplacian (WNLL) on random point cloud. The analysis reveals that the weight is very important in the convergence and it should have the same order as $|P|/|S|$, i.e. $\mu\sim |P|/|S|$. The result in this paper provides the WNLL a solid theoretical foundation.

Furthermore, our analysis also shows that the convergence may fail with extremely low labeling rate. As discussed in Section \ref{sec:convergence}, in this case, we should consider other approaches. One interesting option is to minimize $L_\infty$ norm of the gradient instead of the $L_2$ norm, i.e. to solve the following optimization problem
\begin{align*}
  \min_u \left(\max_{\bx\in P\cup S} \left(\sum_{\by\in P\cup S}w(\bx,\by)(u(\bx)-u(\by))^2\right)^{1/2}\right),
\end{align*}
with the constraint
\begin{align*}
  u(\bx)=b(\bx),\quad \bx\in S.
\end{align*}
This approach is closely related to the infinity Laplacian \cite{GEL11,EDLL14}. The above optimization problem can be solved by the split Bregman iteration. An interesting observation
is that the WNLL can accelerate convergence of the split Bregman iteration and improve efficiency. This will be further explore in our future work.

\vspace{0.2in}
\noindent
\textbf{Acknowledgment.}
This material is based, in part, upon work supported by the U.S. Department of Energy, Office of Science and by National Science Foundation, and National Science Foundation of China, under Grant Numbers DOE-SC0013838 and DMS-1554564, (STROBE), NSFC 11671005.

\newpage

\end{document}